\documentclass{amsart}
\usepackage{amsmath, amssymb,epic,graphicx,mathrsfs,enumerate}
\usepackage[all]{xy}

\usepackage{amsthm}
\usepackage{amssymb}
\usepackage{latexsym}
\usepackage{longtable}
\usepackage{epsfig}
\usepackage{amsmath}
\usepackage{hhline}

 \DeclareMathOperator{\perm}{Sym}

 \DeclareMathOperator{\frat}{Frat}

\DeclareMathOperator{\md}{MaxDim}

\DeclareMathOperator{\alt}{Alt}
\DeclareMathOperator{\End}{End}

 \newcommand{\ol}{\overline}

\newcommand{\FF}{\mathbb F}

\renewcommand{\emptyset}{\varnothing}

% THEOREMS ---------------------------------------------------------------
%%%%%%%%%\theoremstyle{plain}
\newtheorem{thm}{Theorem}%[section]
\newtheorem{cor}[thm]{Corollary}
 \newtheorem{lemma}[thm]{Lemma}
\newtheorem{prop}[thm]{Proposition} 
 \newtheorem{defn}[thm]{Definition}

\numberwithin{equation}{section}

\renewcommand{\footnote}{\endnote}
\newcommand{\ignore}[1]{}\makeglossary
\begin{document}
\bibliographystyle{amsplain}
\subjclass{20F16, 20F05, 20F30}
\title[Maximal subgroups in general position]{Maximal subgroups of finite soluble groups\\ in general position}
\author{Eloisa Detomi and Andrea Lucchini}
\address{
Eloisa Detomi and Andrea Lucchini\\ Universit\`a degli Studi di Padova\\  Dipartimento di Matematica\\ Via Trieste 63, 35121 Padova, Italy}

\begin{abstract}For a finite group $G$ we investigate the difference between the maximum size $\md(G)$ of an \lq\lq independent\rq\rq\ family of maximal subgroups of $G$ and maximum size $m(G)$ of an irredundant sequence of generators of $G$. We prove that $\md(G)=m(G)$ if the derived subgroup of $G$ is nilpotent.
However $\md(G)-m(G)$ can be arbitrarily large: for any odd prime $p,$ we construct a finite soluble group with Fitting length 2 satisfying $m(G)=3$ and $\md(G)=p.$
\end{abstract}
\maketitle
\section{Introduction}
Let $G$ be a finite group. A sequence $(g_1,\dots,g_n)$ of elements of $G$ is said to be  {\emph{irredundant}} if $\langle g_j \mid j\neq i\rangle$ is properly contained in $\langle g_1,\dots,g_n\rangle$ for every $i\in \{1,\dots,n\}.$
 Let  $i(G)$ be the maximum size of any irredundant sequence in $G$ and let
 $m(G)$ be the maximum size of any  irredundant generating sequence of $G$
(i.e. an irredundant  sequence $(g_1,\dots,g_n)$ with the property that  $\langle g_1,\dots,g_n\rangle=G$).
 Clearly $m(G)\leq i(G)=\max\{m(H)\mid H\leq G\}.$
 The invariant $m(G)$ has received some attention (see, e.g., \cite{cc}, \cite{w}, \cite{sw}, \cite{ak}, \cite{al}, \cite{al2})
also because of its role in the efficiency of the product replacement algorithm \cite{pak}.
 In a recent paper Fernando \cite{rf} investigates  a natural connection between irredundant
generating sequences of $G$ and certain configurations of maximal subgroups of $G.$
A family of subgroups $H_i\leq G,$ indexed by a set $I$, is said to be in general position if
for every $i\in I,$  the intersection $\cap_{j\neq i}H_j$ properly contains $\cap_{j\in I}H_j.$
Define $\md(G)$ as the size of the largest family of maximal subgroups
of $G$ in general position. It can be easily seen that
$m(G)\leq \md(G)\leq i(G)$ (see, e.g., \cite[Proposition 2 and Proposition 3]{rf}).
However the difference  $\md(G)-m(G)$ can be arbitrarily large: for example if $G=\alt(5)\wr C_p$ is the wreath product
of the alternating group of degree 5 with a cyclic group of prime order $p,$ then $\md(G)\geq 2p$ but  $m(G)\leq 5$ \cite[Proposition 12]{rf}.
On the other hand  Fernando proves that $\md(G)=m(G)$ if $G$ is a finite supersoluble group \cite[Theorem 25]{rf},
  but gives also an example  of a finite soluble group $G$ with $m(G)\neq \md(G) $ \cite[Proposition 16]{rf}.

\

In this note we collect more information about the difference $\md(G)-m(G)$ when $G$ is a finite soluble group.
In this case $m(G)$ coincides with the number of complemented factors in a chief series of $G$ (see \cite[Theorem 2]{al}).
Our first result is that the equality $\md(G)=m(G)$ holds for a class of finite soluble groups, properly containing the class of finite supersoluble groups (see, e.g., \cite[7.2.13]{scott}).

\begin{thm}\label{thm:nilp} If $G$ is a finite group and the derived subgroup $G^\prime$ of $G$ is nilpotent, then $\md(G)=m(G).$
\end{thm}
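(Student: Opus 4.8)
The plan is to reduce the problem to a statement about chief series. Since $m(G)$ equals the number of complemented chief factors in any chief series of $G$ (by \cite[Theorem 2]{al}), and since $m(G)\le\md(G)$ always holds, it suffices to prove the reverse inequality: any family $\mathcal H=(H_i)_{i\in I}$ of maximal subgroups in general position has size at most the number of complemented chief factors. I would start by noting that the hypothesis $G'$ nilpotent passes to quotients, so we may argue by induction on $|G|$. If $N=\bigcap_{i\in I}H_i$ is nontrivial, pass to $G/N$ (the images of the $H_i$ are still maximal, still in general position, and the number of complemented chief factors does not increase), so we may assume $\bigcap_{i\in I}H_i=1$.

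Next I would exploit the structure forced by $G'$ nilpotent. Each maximal subgroup $H_i$ has a normal core $K_i=\Core_G(H_i)$, and $G/K_i$ is a primitive soluble group, hence of the form $V_i\rtimes(H_i/K_i)$ where $V_i=F(G/K_i)$ is the unique minimal normal subgroup of $G/K_i$, an elementary abelian $p_i$-group complemented by $H_i/K_i$. The key point to extract from $G'$ nilpotent is that $G'\le F(G)$, so $G/F(G)$ is abelian; this should force the complemented chief factors to ``line up'' in a controlled way. Concretely, I would try to show that from the family $\mathcal H$ one can build a chief series in which each $H_i$ is the stabilizer of (a suitable complement of) one complemented chief factor, with distinct $i$ giving distinct factors — this is exactly the mechanism by which an irredundant generating sequence of maximal length arises, and the general-position hypothesis is what prevents two members of the family from being ``responsible'' for the same chief factor.

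The main obstacle, and the place where nilpotency of $G'$ really does the work, is controlling what happens among the abelian Frattini-free part: in a general soluble group two maximal subgroups in general position can correspond to the ``same'' chief factor (isomorphic, occurring with multiplicity) without either becoming redundant — this is precisely what goes wrong in Fernando's example \cite[Proposition 16]{rf} and in the $\alt(5)\wr C_p$ example. When $G'\le F(G)$, I expect that the relevant chief factors below $F(G)$ on which $G$ acts nontrivially are ``rigid'' enough — because $G/F(G)$ is abelian, the action is via a quotient that is abelian, so one can diagonalize and separate the factors — that a general-position family cannot be longer than the count of genuinely complemented factors. So the heart of the argument is a lemma, proved by induction after stripping off $F(G)$ or a minimal normal subgroup inside it, showing: if $G'$ is nilpotent, $M$ is a minimal normal subgroup of $G$, and $\mathcal H$ is a general-position family of maximal subgroups, then either $M\le\bigcap H_i$ (reduce to $G/M$) or one can find $i_0$ with $M\not\le H_{i_0}$, $M\le H_j$ for $j\ne i_0$, remove $H_{i_0}$, pass to $G/M$, and conclude by induction while accounting for the one complemented chief factor $M$ contributes. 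Verifying that such an $i_0$ exists and that removing it genuinely preserves general position in $G/M$ — using the abelian action to guarantee $M$ has a well-defined complement structure — is the technical crux.
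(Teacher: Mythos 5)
There is a genuine gap: the inductive lemma you identify as the heart of the argument is false, even under the hypothesis that $G'$ is nilpotent, so the induction scheme built on it cannot be carried out. Take $G=V\rtimes H$ with $V=\FF_3^{\,2}$ and $H=\langle h\rangle$ of order $2$ acting by $v\mapsto -v$ (so $G'=V$ is abelian, $|G|=18$). Let $L_1,L_2,L_3$ be three distinct lines of $V$ and $v\notin L_3$, and set $M_1=L_1\rtimes H$, $M_2=L_2\rtimes H$, $M_3=L_3\rtimes H^{v}$. Each $M_i$ has index $3$, hence is maximal; one checks $M_1\cap M_2=H$, $M_1\cap M_3=H^{w}$ and $M_2\cap M_3=H^{w'}$ for suitable $w\in L_1\cap(v+L_3)$, $w'\in L_2\cap(v+L_3)$, while $M_1\cap M_2\cap M_3=1$. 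So this is a general-position family of three maximal subgroups. Every minimal normal subgroup of $G$ is a line $L\le V$ (since $C_G(V)=V$ and every subgroup of $V$ is $H$-invariant), and $L\le M_i$ if and only if $L=L_i$; hence for every minimal normal subgroup at least two members of the family fail to contain it. Neither branch of your dichotomy applies to any minimal normal subgroup, and the guiding picture that each member of a general-position family is ``responsible'' for a distinct complemented chief factor is wrong: here three maximal subgroups are all attached to the single homogeneous component $V$, which contributes only two complemented chief factors, and the third must be charged against the complemented factor of $G/V\cong H$. (A smaller point: your opening reduction is not available as stated, since $N=\bigcap_i H_i$ is an intersection of maximal subgroups and need not be normal, so $G/N$ is not defined.)

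This overshoot-by-one phenomenon is exactly what the paper's proof is organized around, and it is the part your sketch does not supply. After reducing to $\frat(G)=1$, the paper writes $\fit(G)=V_1^{n_1}\times\cdots\times V_r^{n_r}$ with an abelian complement $H$, proves by a direct computation in $V\rtimes\langle h\rangle$ (Lemma~\ref{lem:inters} and Corollary~\ref{cor:n+1}) that at most $n_i+1$ members of the family can supplement $F$ through the component $V_i^{n_i}$, and then shows that whenever the bound $n_i+1$ is attained the centralizer $C_H(V_i)$ can be adjoined to the cores of the $F$-containing members to form a general-position family of subgroups of the abelian group $H$, whose size is at most $m(H)$ by Lemma~\ref{lem:abelian}. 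Summing gives $|\mathcal M|\le\sum_i n_i+m(H)=m(G)$. Your proposal correctly isolates the relevant structural facts ($G'\le\fit(G)$, abelian action, scalar/diagonalizable modules), but the counting mechanism that converts them into the inequality $\md(G)\le m(G)$ is missing, and the lemma you propose in its place has counterexamples.
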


However, already in the class of finite soluble groups with Fitting length equal to 2, examples can be exhibited of groups
$G$ for which the difference $\md(G)-m(G)$ is arbitrarily large.

\begin{thm}For any odd prime $p,$ there exists a finite group $G$ with Fitting length 2 such that
$m(G)=3$, $\md(G)=p$ and $i(G)=2p.$
\end{thm}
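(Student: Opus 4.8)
The plan is to realize $G$ as a wreath product. Fix a prime $r$ with $r\equiv 1\pmod p$ (which exists by Dirichlet's theorem), so that $C_p$ embeds into $\aut(C_r)$, and set
\[
G:=(C_r\rtimes C_p)\wr C_p = N\rtimes W,
\]
where $N=C_r^{\,p}$ is the product of the bottom cyclic factors of the $p$ inner wreath products and $W=C_p\wr C_p$. Viewing $N$ as an $\FF_r W$-module, I would check directly — using that the base of $W$ acts coordinatewise and faithfully while the top cyclically permutes the coordinates — that the only $W$-submodules of $N$ are $0$ and $N$; hence $N$ is a minimal normal subgroup of $G$ and $C_G(N)=N$. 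It follows that $F(G)=N$ and $G/F(G)=W$ is a $p$-group, so $G$ has Fitting length $2$ (exactly $2$, since $N$ is not central).

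For $m(G)$: in a chief series of $G$ through $N$, the unique chief factor inside $N$ is $N$ itself, which is complemented by $W$, while the chief factors above $N$ are those of the $p$-group $W$, of which exactly $d(W)=2$ are complemented; hence $m(G)=1+2=3$. For $i(G)=2p$, the base subgroup $B\cong\prod_{i=1}^p(C_r\rtimes C_p)$ is a direct product of $p$ groups each having two complemented chief factors, so $m(B)=2p$ and $i(G)\ge 2p$. Conversely, every subgroup $H\le G$ is soluble, so $m(H)$ is the number of complemented chief factors in a chief series of $H$; refining such a series through $H\cap N$, the chief factors inside $H\cap N$ number at most $\dim_{\FF_r}(H\cap N)\le\dim_{\FF_r}N=p$, while the chief factors above $H\cap N$ are those of the $p$-group $H/(H\cap N)\le W$ and contribute at most $d\bigl(H/(H\cap N)\bigr)\le p$ complemented factors (no subgroup of $W$ needs more than $p$ generators, since $|W|=p^{p+1}$ and $d(W)=2$). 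Thus $m(H)\le 2p$, so $i(G)=2p$.

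It remains to prove $\md(G)=p$. For the lower bound I would write $N=\FF_r^{\,p}$ with standard basis $e_1,\dots,e_p$ and show that the $p$ conjugates $W^{e_1},\dots,W^{e_p}$ of $W$ — which are maximal subgroups, being complements of the minimal normal subgroup $N$ — are in general position. The key computation is that $W^{v_1}\cap\dots\cap W^{v_b}$ is a conjugate of $C_W\bigl(\langle v_i-v_1:i\ge 2\rangle\bigr)$; evaluating these centralizers from the explicit action of $W=C_p\wr C_p$ on $\FF_r^{\,p}$ gives $\bigcap_{i=1}^pW^{e_i}=1$, whereas for each $i$ the intersection $\bigcap_{j\ne i}W^{e_j}$ still contains a non-trivial element of $W$ (a suitable coordinate automorphism fixing $e_j$ for all $j\ne i$); hence the family is in general position and $\md(G)\ge p$.

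The hard part is $\md(G)\le p$, and this is where I expect the main obstacle. I would first classify the maximal subgroups of $G$: those containing $N$ correspond bijectively to the $p+1$ maximal subgroups of the $p$-group $W$ (type~1), and those not containing $N$ are exactly the complements of $N$, forming a single conjugacy class $\{W^v:v\in N\}$ since coprimeness of $|N|$ and $|W|$ gives $H^1(W,N)=0$ (type~2). Given a family $\mathcal F$ of maximal subgroups in general position with $a$ members of type~1 and $b$ of type~2, I would argue: if $b=0$ then $\mathcal F$ is a family of maximal subgroups of $W$ in general position, so $a\le\md(W)=m(W)=2\le p$ by Theorem~\ref{thm:nilp}; and if $b\ge 1$ then a careful analysis of how the two kinds of maximal subgroups meet inside $N$ and inside $W$ shows that the $b$ parameters occurring among the $W^v$'s must be affinely independent in $\FF_r^{\,p}$, that at most one type~1 member can occur (any second one has image in $W$ meeting $\Phi(W)$ already, hence cuts the type~2 part of the running intersection — a conjugate of a coordinate subgroup of the base of $W$ — in the same way and is therefore redundant), and ultimately that $a+b\le p$. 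The delicate point, which I expect to be the crux, is exactly this bookkeeping: proving that a maximal subgroup becomes redundant in a general-position family once its ``cutting action'' on the running intersection is subsumed by the others, and converting that into the numerical bound $a+b\le p$.
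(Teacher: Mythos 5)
Your construction is the paper's group in different clothing: $(C_r\rtimes C_p)\wr C_p\cong \FF_r^{\,p}\rtimes(C_p\wr C_p)$ is exactly the group $G_{q,p}=V\rtimes H$ of Section 3 (with $q=r$), and your verifications of the Fitting length, of $m(G)=3$, of $i(G)=2p$, and of the lower bound $\md(G)\ge p$ via the complements $W^{e_1},\dots,W^{e_p}$ all match the paper's arguments and are correct. (Your bound $d(X)\le p$ for subgroups of $W$, and the identification $\bigcap_i W^{v_i}=\bigl(C_W(\langle v_i-v_1\rangle)\bigr)^{v_1}$, both hold and do the required work.)

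The genuine gap is the inequality $\md(G)\le p$, which is the substance of the theorem and which you explicitly leave as a sketch (``a careful analysis \dots shows'', ``I expect to be the crux''). Moreover, two ingredients of your sketch are wrong as stated. First, ``at most one type~1 member can occur'' is false: if $X_1\neq X_2$ are maximal in $W$, then $\{W,\ N\rtimes X_1,\ N\rtimes X_2\}$ is in general position (the pairwise intersections are $X_1$, $X_2$ and $N\rtimes\frat(W)$, each properly containing the triple intersection $\frat(W)$), so a family with $b\ge 1$ can contain two members containing $N$. The correct statement, used in the paper, is that at most \emph{two} maximal subgroups containing $N$ can occur, since any two of them meet in $N\rtimes\frat(W)$, which lies in every other such subgroup. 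Second, ``affinely independent in $\FF_r^{\,p}$'' permits up to $p+1$ points, so by itself it cannot yield $a+b\le p$. What actually closes the argument in the paper is a quantitative lemma: writing $\Delta(v_1,v_2)$ for the set of coordinates where $v_1$ and $v_2$ agree, one has $|W^{v_1}\cap W^{v_2}|=p^{|\Delta(v_1,v_2)|}$ when $\Delta(v_1,v_2)\neq\emptyset$ and $|W^{v_1}\cap W^{v_2}|\le p$ otherwise. If the family had more than $p$ members, the strictly decreasing chain of partial intersections would force every pairwise intersection to contain a subgroup chain of length at least $p-1$, hence $|\Delta(v_i,v_j)|\ge p-1$ for every pair of complements; with three complements $W^0,W^{v_2},W^{v_3}$ this forces $v_2,v_3$ to be supported on distinct single coordinates and gives $|W^{v_2}\cap W^{v_3}|=p^{p-2}$, a contradiction. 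Hence a family of size exceeding $p$ has at most two complements and at most two members containing $N$, so has size at most $4$, and the residual case $p=3$, $t=4$ needs a separate explicit computation with $\frat(H)=\{(c_1,c_2,c_3)\mid c_1c_2c_3=1\}$. None of this bookkeeping is present in your proposal, so the claim $\md(G)=p$ remains unproved.
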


Notice that if $G$ is a soluble group with $m(G)\neq \md(G)$, then $m(G)\geq 3.$ Indeed if $m(G)\leq 2,$ then
a chief series of $G$ contains at most two complemented factors and it can be easily seen that this implies
that $G^\prime$ is nilpotent.

\section{Groups whose derived subgroup is nilpotent}
\begin{defn}
A family of subgroups $H_i\leq G,$ indexed by a set $I$, is said to be in general position if
for every $i\in I,$  the intersection $\cap_{j\neq i}H_j$ properly contains $\cap_{j\in I}H_j$
(equivalently, $H_i$ does not contain $\cap_{j\neq i}H_j$).
\end{defn}

Note that the subgroups $\{ H_i  \mid i \in I\}$ are in general position if and only,  whenever $I_1 \neq I_2$ are subsets of $S$, then $ \cap_{i \in I_1} H_i \neq \cap_{i \in I_2}H_i$ (see, e.g., Definition 1 in \cite{rf}).

\begin{lemma}\label{lem:inters}
Let $\FF$ be a field of characteristic $p$. Let $V$ a finite dimension $\FF$-vector space,
  let $H= \langle h \rangle$ where $h \in \FF^*$ such that $\FF=\FF_p[h]$ and set $G=V \rtimes H$.

If $M_1, \ldots , M_r$ is a set of maximal subgroups of $G$ supplementing $V$, then
$$  M_1 \cap \ldots \cap M_r= W \rtimes K$$
where $W$ is a $\FF$-subspace of $V$ and $K$ is either trivial or a conjugate $H^v$ of $H$, for some $v \in V$.
\end{lemma}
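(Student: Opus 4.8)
The plan is to show that any subgroup of $G$ supplementing $V$ — in particular each $M_i$ — has the form $U\rtimes H^{v}$ with $U$ an $\FF$-subspace of $V$ and $v\in V$, and then to compute the intersection of several such subgroups directly. Throughout I would use two elementary observations: since $H$ acts on $V$ by multiplication by elements of $\FF$ and $\FF=\FF_p[h]$, the $H$-submodules of $V$ are exactly its $\FF$-subspaces; and since $\mathrm{ord}(h)$ divides $|\FF^*|$, the order of $H$ is prime to $p$, so $G$ is soluble and $H$ is a Hall $p'$-subgroup of $G$.

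First, fixing $i$ and putting $U_i:=M_i\cap V$, one sees that $U_i$ is normalised by $M_i$ and, $V$ being abelian, by $V$, hence by $G=M_iV$; thus $U_i\trianglelefteq G$, so $U_i$ is an $\FF$-subspace, and it is proper (otherwise $M_i\supseteq V$ and $M_i=G$). In $G/U_i$ the subgroup $M_i/U_i$ is a complement to the $p$-group $V/U_i$, so by Schur--Zassenhaus all such complements are conjugate, necessarily by an element of $V/U_i$, to the image of $H$; lifting a conjugating element to some $v_i\in V$ and using $U_i\cap H^{v_i}\le V\cap H^{v_i}=1$ yields $M_i=U_i\rtimes H^{v_i}$. (Maximality of $M_i$ moreover forces $\dim_\FF V/U_i=1$, by a Dedekind-modular argument showing $V/U_i$ to be a chief factor, but I will not need this.)

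Now set $L:=M_1\cap\dots\cap M_r$ and $W:=U_1\cap\dots\cap U_r$. Then $W=L\cap V$ is an $\FF$-subspace and a normal $p$-subgroup of $L$, while $L/W$ maps isomorphically under the projection $\pi\colon G\to G/V\cong H$ onto a subgroup $\pi(L)\le H$ of order prime to $p$; by Schur--Zassenhaus $L=W\rtimes K$ with $K$ a Hall $p'$-subgroup of $L$ and $K\cong\pi(L)$. It remains to establish the dichotomy $\pi(L)\in\{1,H\}$. For this I would record that, writing $H^{v_i}=\{\,((h^j-1)v_i)\,h^j : j\in\Z\,\}$, an element $vh^j\in G$ lies in $M_i$ precisely when $v-(h^j-1)v_i\in U_i$; hence some element of $G$ with $H$-component $h^j$ lies in every $M_i$ if and only if the tuple $\big((h^j-1)(v_i+U_i)\big)_i$ belongs to the image $\Delta$ of the map $V\to\bigoplus_iV/U_i$, $v\mapsto(v+U_i)_i$. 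When $h^j\neq1$ the scalar $h^j-1$ is a unit of $\FF$, so multiplication by it is an $\FF$-automorphism of $\bigoplus_iV/U_i$ carrying $\Delta$ onto itself; consequently that tuple lies in $\Delta$ for one such $j$ if and only if $(v_i+U_i)_i\in\Delta$, hence if and only if it does for all $j$ with $h^j\neq1$. Therefore $\pi(L)=H$ when $(v_i+U_i)_i\in\Delta$, and $\pi(L)=1$ otherwise (only $j\equiv0$ survives). In the second case $L=W=W\rtimes1$; in the first $|K|=|H|$, so $K$ and $H$ are Hall $p'$-subgroups of $G$ of the same order and hence $G$-conjugate, and as the conjugates of $H$ in $G=VH$ are precisely the $H^v$ with $v\in V$, we get $L=W\rtimes H^v$.

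I expect the only genuinely non-formal step to be this dichotomy: a priori $\pi(L)$ could be any subgroup of the cyclic group $H$, and it is exactly the linear structure — membership in $M_i$ being governed by cosets of the $\FF$-subspace $U_i$, and $h^j-1$ acting on $\bigoplus_iV/U_i$ as an invertible scalar stabilising $\Delta$ — that rules out the proper nontrivial subgroups. The remaining ingredients (the normal-supplement observation, the splitting $L=W\rtimes K$, conjugacy of Hall $p'$-subgroups) are routine.
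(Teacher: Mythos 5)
Your proof is correct, but it is organized quite differently from the paper's. The paper argues by induction on $r$: assuming $M_1\cap\dots\cap M_{r-1}=W_1\rtimes K_1$ is already of the stated form, it intersects with $M_r=W_2\rtimes H^{w}$, notes that $W_1\cap W_2$ is the normal Sylow $p$-subgroup of the intersection, and, when that intersection is not a $p$-group, takes a nontrivial $p'$-element $y=h_1^{v_1}=h_2^{v_2}$ lying in conjugates of $H$ inside both factors; since a nontrivial power of $h$ acts on $V$ as a scalar different from $1$ it has no nonzero fixed vectors, so $C_H(v_1-v_2)\neq 1$ forces $v_1=v_2$ and the two Hall complements coincide. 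You avoid the induction entirely: after establishing (more explicitly than the paper does) that every supplement of $V$ has the form $U_i\rtimes H^{v_i}$, you compute the image $\pi(L)$ of $L=\cap_i M_i$ in $H$ directly, encoding membership of $h^j$ in $\pi(L)$ as the linear condition $\bigl((h^j-1)(v_i+U_i)\bigr)_i\in\Delta$ and using invertibility of the scalar $h^j-1$ to obtain the all-or-nothing dichotomy $\pi(L)\in\{1,H\}$. Both arguments ultimately rest on the same fact --- $h^j-1$ is an invertible scalar for $h^j\neq 1$, i.e.\ the action of $H$ on $V$ is fixed-point-free --- but your version is non-inductive and yields an extra piece of information as a by-product: $K$ is a full conjugate of $H$ precisely when $(v_i+U_i)_i$ lies in the image of the diagonal map $V\to\bigoplus_i V/U_i$, and is trivial otherwise.
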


\begin{proof}
By induction on $r$ we can assume  that $T_1=M_1 \cap \ldots \cap M_{r-1}= W_1 \rtimes K_1$, where $W_1$ is a subspace of $V$ and
 $K_1=\{1\}$ or $K_1 = H^v$, $v \in V$. The maximal subgroup $M_r$ is a supplement of $V$, so we can write
 $M_r=W_2 \rtimes H^{w}$, where $W_2$ is a subspace of $V$ and $w \in V$.  For shortness, set $T_2=M_r$ and  $T=T_1 \cap T_2$.
Since $W_1$ and $W_2$ are normal Sylow $p$-subgroups of $T_1$ and $T_2$, respectively, their intersection $W=W_1 \cap W_2$ is a normal
Sylow $p$-subgroup of $T$. In the case where $T$ is not a $p$-group, then
 $T = W \rtimes K$ where $K$ is a non-trivial $p'$-subgroup of $T$. Then
  $K$ is contained in some conjugates $H^{v_1}$  and $H^{v_2}$ of the $p'$-Hall subgroups of $T_1$ and $T_2$, respectively.
% because all $p'$-Hall are conjugates in a  soluble group.
 In particular, there exists $1\neq y \in K$ such that $y=h_1^{v_1}=h_2^{v_2}$ for some $h_1, h_2 \in H$. It follows that $1\neq h_1=h_2 \in C_H(v_1-v_2)$.
From  $  C_H(v_1-v_2)\neq \{1\},$  we deduce that $v_1=v_2$.
Thus we have $T_1=W_1 \rtimes H^{v_1}$, $T_2=W_2 \rtimes H^{v_1}$ and $T=W \rtimes H^{v_1}$.
\end{proof}

\begin{cor}\label{cor:n+1}
In the hypotheses of Lemma \ref{lem:inters}, if  $M_1, \ldots , M_r$ are in general position, then
\begin{enumerate}
\item $r \le \dim (V)+1$;
\item if $r= \dim (V)+1$, then, for a suitable permutation of  the indices, $\bigcap_{i=1}^{r-1} M_i=H^v$ for some $v\in V$, and
$\bigcap_{i=1}^{r} M_i=\{1\}.$
\end{enumerate}
\end{cor}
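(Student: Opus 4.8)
The plan is to recover, via Lemma~\ref{lem:inters}, the whole family of intersections of $M_1,\dots,M_r$. Write $M_i=W^{(i)}\rtimes H^{w_i}$ and, for $J\subseteq\{1,\dots,r\}$, put $T_J=\bigcap_{i\in J}M_i$; by the lemma $T_J=W_J\rtimes K_J$ with $W_J=\bigcap_{i\in J}W^{(i)}$ and $K_J$ either trivial or a single conjugate of $H$. Being in general position means the $2^r$ subgroups $T_J$ are pairwise distinct, so along the chain $M_1=T_{\{1\}}\supsetneq T_{\{1,2\}}\supsetneq\dots\supsetneq T_{\{1,\dots,r\}}$ every containment is strict. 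Since $\lvert T_J\rvert=\lvert W_J\rvert\cdot\lvert K_J\rvert$ with $\lvert K_J\rvert\in\{1,\lvert H\rvert\}$, a step at which $\dim W_J$ does not drop must be one at which $K_J$ collapses from a conjugate of $H$ to $\{1\}$, and since $T_{J'}$ is a $p$-group whenever $J'\supseteq J$ and $T_J$ is, such a collapse happens at most once. Hence at least $r-2$ of the $r-1$ steps strictly decrease $\dim W_J$; as $M_1$ is a proper subgroup, $W^{(1)}$ is a proper subspace of $V$ (otherwise $M_1=V\rtimes H^{w_1}=G$), so $r-2\le\dim W^{(1)}\le\dim V-1$, which is (1).

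For (2), suppose $r=\dim V+1$. (If $H=\{1\}$ then $G=V$, forcing $r\le\dim V$, so (2) is vacuous; hence I may assume $H\neq\{1\}$, and then no $M_i$ is a $p$-group.) First, $T_{\{1,\dots,r\}}$ must be a $p$-group: otherwise every $T_{\{1,\dots,j\}}$ is non-$p$ too, no step of the chain collapses $K$, all $r-1$ steps shrink $W$, and $\dim W^{(1)}\ge r-1=\dim V$ — impossible. Repeating the counting of (1) with each $M_i$ placed, in turn, at the head of the chain, the chain of inequalities $r-2\le\dim W^{(i)}\le\dim V-1$ must be an equality throughout: thus every $W^{(i)}$ has codimension exactly $1$ in $V$, the $K$-collapse does occur, and $\dim W_{\{1,\dots,r\}}=0$, so $\bigcap_{i=1}^{r}M_i=T_{\{1,\dots,r\}}=\{1\}$.

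The crux is to show that, after permuting the $M_i$, the $(r-1)$-fold intersection $T_{\{1,\dots,r-1\}}$ is \emph{not} a $p$-group. Suppose instead that $\bigcap_{i\ne k}M_i$ is a $p$-group for every $k$, and set $\mathcal D=\{\,J:\ T_J\text{ is not a }p\text{-group}\,\}$, a nonempty family (no $M_i$ being a $p$-group) closed under passing to subsets, all of whose members have size $\le r-2$. Choose a maximal $J\in\mathcal D$, put $s=\lvert J\rvert$, and write $T_J=W_J\rtimes H^v$. As $W_J$ is the intersection of $s$ hyperplanes, $\dim W_J\ge\dim V-s\ge\dim V-(r-2)=1$, so $W_J\neq\{1\}$. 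For $k\notin J$, maximality of $J$ makes $T_J\cap M_k$ a $p$-group; but if $W_J\not\subseteq W^{(k)}$ then $W_J+W^{(k)}=V$ (because $W^{(k)}$ is a hyperplane), so writing $v-w_k=a+b$ with $a\in W_J$ and $b\in W^{(k)}$, the conjugate $H^{\,v-a}$ lies both in $W_J\rtimes H^v$ and in $W^{(k)}\rtimes H^{w_k}=M_k$ — using that $H^x\le W\rtimes H^y$ whenever $x-y\in W$ — and so $T_J\cap M_k$ is not a $p$-group, a contradiction. Hence $W_J\subseteq W^{(k)}$ for every $k\notin J$, so $\bigcap_{i=1}^{r}W^{(i)}=W_J\neq\{1\}$, contradicting $\bigcap_{i=1}^{r}M_i=\{1\}$; thus, after reindexing, $T_{\{1,\dots,r-1\}}$ is not a $p$-group.

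To conclude, since $T_{\{1,\dots,r-1\}}$ is non-$p$, so is every $T_{\{1,\dots,j\}}$ with $j\le r-1$, whence the mechanism in the proof of Lemma~\ref{lem:inters} (if $T_1\cap M_i$ is not a $p$-group then $T_1$, $M_i$ and $T_1\cap M_i$ all have a common conjugate of $H$ as $p'$-part) shows that $M_1,\dots,M_{r-1}$ share a common $p'$-part $H^v$ and $T_{\{1,\dots,j\}}=\bigl(\bigcap_{i\le j}W^{(i)}\bigr)\rtimes H^v$ for $j\le r-1$. General position among $M_1,\dots,M_{r-1}$ then makes $W^{(1)}\supsetneq W^{(1)}\cap W^{(2)}\supsetneq\dots\supsetneq\bigcap_{i=1}^{r-1}W^{(i)}$ strict, so $\dim\bigcap_{i=1}^{r-1}W^{(i)}\le\dim W^{(1)}-(r-2)=0$; therefore $\bigcap_{i=1}^{r-1}M_i=H^v$, while $\bigcap_{i=1}^{r}M_i=\{1\}$ was already established. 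I expect the genuine obstacle to be the preceding paragraph — excluding the case where every $(r-1)$-fold intersection is a $p$-group — since there a dimension count alone does not suffice and one must exploit the positions $w_i$ of the complements.
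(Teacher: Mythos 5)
Your proof is correct. For part (1) you run essentially the same argument as the paper: the chain $T_{\{1\}}\supsetneq T_{\{1,2\}}\supsetneq\cdots$ together with Lemma \ref{lem:inters}, the observation that at most one step can fail to drop $\dim W_J$ (the unique $K$-collapse), and a dimension count. For part (2), however, you reach the key point --- that some $(r-1)$-fold intersection is a nontrivial $p'$-group --- by a genuinely different and longer route. The paper gets it almost for free: at the unique stalling index $i+1$ one has $W^{(i+1)}\supseteq \bigcap_{l\le i}W^{(l)}$, so omitting $M_{i+1}$ leaves the $p$-part of the intersection equal to $W_{\{1,\dots,r\}}=\{0\}$, while general position directly gives $\bigcap_{l\neq i+1}M_l\supsetneq\bigcap_{l}M_l=\{1\}$; hence that intersection is a nontrivial $p'$-group, i.e.\ some $H^v$. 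You instead suppose every $(r-1)$-fold intersection is a $p$-group, pick a maximal non-$p$ subset $J$, and use the explicit element $H^{v-a}$ to force $W_J\le W^{(k)}$ for all $k\notin J$, contradicting $\bigcap_i W^{(i)}=\{0\}$. That computation is valid (it rests on $W_J$ and $W^{(k)}$ being $H$-invariant, which holds since they are the normal Sylow $p$-subgroups of $T_J$ and $M_k$ and conjugation by $H^u$ on $V$ agrees with that of $H$; this is worth stating, as otherwise $H^x\le W\rtimes H^y$ for $x-y\in W$ is not automatic). In short, the paper exploits the general-position hypothesis at the decisive moment, whereas you re-derive the needed nontriviality from the geometry of the complements; both work, and your concluding dimension count identifying $\bigcap_{i\le r-1}M_i=H^v$ coincides with the paper's.
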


\begin{proof}  Let $n=\dim V$.
Since the subgroups  $M_1, \ldots , M_r$ are in general position, the set of the intersections
 $T_j=\cap_{i=1}^{j} M_i$, for $j=1, \ldots , r$, is a strictly decreasing chain of subgroups.
 By Lemma \ref{lem:inters},  $T_i=W_i \rtimes K_i$, where $W_i$ is a $\FF$-subspace of $W_{i-1}$ and
  $K_i$ is either trivial or a conjugate of $H$.
 Note that  $n-1=\dim W_1 \ge \dim W_{i} \ge \dim W_{i+1} $. Moreover,  if $\dim W_{i} = \dim W_{i+1}$ for some index $i$, then $W_i=W_{i+1}$ and, since
 $T_i \neq T_{i+1}$, we have that  %,   there must exist an element  $  v\in V$ such that:
 \begin{itemize}
\item  $K_1,  \ldots ,K_{i}$ are non-trivial;
\item $K_{i+1}= \cdots = K_r=\{1\}$.
\end{itemize}
In particular there exists at most one index $i$ such that $\dim W_{i} = \dim W_{i+1}$. As $\dim W_1 = n-1$,
 it follows that we can have at most $n+1$ subgroups $T_i$, hence $r \le n+1$.

In the case where $r=n+1$, we actually have that  $\dim W_{i} = \dim W_{i+1}$ for at least one, and precisely one, index  $i$.
 This implies that $W_i=W_{i+1}$ and, setting
 $J=\{1, \ldots , n+1\}  \setminus \{i+1\}$ and $T=\cap_{l \in J}  M_{l}$, we get that
 $W_{n+1}$ coincides with the Sylow $p$-subgroup of $T$. Since $\dim W_{n+1}=0$ and $T\neq 1$
  we deduce that $T = H^v$, for some $v \in V$. Finally,  $T \cap M_{i+1}=\{1\}$.
\end{proof}

A proof of the following lemma is implicitly contained in Section 1 of \cite{rf},
  but, for the sake of completeness, we sketch a direct proof here.

\begin{lemma} \label{lem:abelian}
Let $H$ be an abelian finite group. The size of a set of subgroups in general position is at most $m(H)$.
\end{lemma}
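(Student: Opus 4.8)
The plan is to reduce the statement about subgroups in general position in an abelian group $H$ to the purely combinatorial fact that $m(H)$ equals the number of prime factors of $|H|$ counted with multiplicity (equivalently, the composition length of $H$), and that any chain of subgroups of an abelian group has length at most this number.

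First I would recall that in an abelian group every subgroup is subnormal, so a chief series of $H$ has every factor of prime order, and each such factor is complemented precisely when the corresponding prime-power cyclic summand splits off; in fact for abelian $H$ one has $m(H)=\sum_p d_p$, where $d_p$ is the minimal number of generators of the Sylow $p$-subgroup — this is elementary and can be quoted or sketched in a line. More to the point, I would use the observation following the definition in this section: a family $\{H_i \mid i\in I\}$ is in general position if and only if the map $I_1\mapsto \bigcap_{i\in I_1}H_i$ is injective on subsets of $I$. Hence, if $|I|=r$, the $2^r$ intersections $\bigcap_{i\in I_1}H_i$ are pairwise distinct subgroups of $H$, and they are closed under (finite) intersection, forming a sublattice of the subgroup lattice of $H$ that is (order-)isomorphic to the Boolean lattice $2^I$ via $I_1\mapsto\bigcap_{i\in I_1}H_i$ (note this map reverses inclusion). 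In particular the subgroup lattice of $H$ contains a chain of length $r$, namely $H\supseteq H_{i_1}\supseteq H_{i_1}\cap H_{i_2}\supseteq\cdots\supseteq\bigcap_{i\in I}H_i$.

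The key step is then: the length of the longest chain in the subgroup lattice of a finite abelian group $H$ equals the composition length $\Omega(|H|)$, and this in turn equals $m(H)$. The first equality is the Jordan–Hölder-type statement that every maximal chain of subgroups has the same length, namely $\Omega(|H|)$ — true in any finite group, since consecutive terms in a maximal chain give composition factors. For the second, I would argue that for an abelian group $m(H)$ also equals $\Omega(|H|)$: reducing to $p$-groups via the direct sum decomposition (an irredundant generating sequence of $H$ splits according to primes), an abelian $p$-group of order $p^k$ has $m=k$ because one can pick generators $g_1,\dots$ whose orders are the elementary divisors and then refine, while $m\le k$ since $m(H)\le\Omega(|H|)$ always. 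So from the chain of length $r$ produced above we get $r\le\Omega(|H|)=m(H)$, which is exactly the claim.

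The main obstacle — really the only place needing care — is pinning down the equality $m(H)=\Omega(|H|)$ for abelian $H$ cleanly, since $m$ is defined via irredundant generating sequences rather than composition length; but this is standard (it is also immediate from \cite[Theorem 2]{al} cited in the introduction, which says $m(G)$ is the number of complemented chief factors, all of which are complemented in the abelian case) and can be disposed of in a couple of sentences. Everything else is the soft lattice-theoretic observation that a general-position family of size $r$ forces a chain of length $r$, which uses nothing beyond the injectivity reformulation already stated in the text.
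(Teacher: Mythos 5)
There is a genuine gap, and it is fatal to the approach: your key claim that $m(H)=\Omega(|H|)$ (the composition length) for abelian $H$ is false. Already $H=C_{p^2}$ has $\Omega(|H|)=2$ but $m(H)=1$: the subgroup lattice is a chain, so no irredundant generating sequence (and no general-position family) can have two terms. Your proposed "refinement" of a generating set to one of length $k=\Omega(|H|)$ contradicts the Burnside basis theorem, which forces every irredundant generating set of an abelian $p$-group to have exactly $d(H)$ elements. Indeed, this later claim contradicts the correct formula you state at the outset, $m(H)=\sum_p d_p$, since $\sum_p d_p<\Omega(|H|)$ as soon as some Sylow subgroup is non-elementary. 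Consequently the soft lattice argument — a general-position family of size $r$ yields a strictly decreasing chain of length $r$, hence $r\le\Omega(|H|)$ — proves only the weaker bound by composition length, not the lemma. The gap is not cosmetic: for $H=C_{p^2}\times C_p$ the longest subgroup chain has length $3$ while $m(H)=2$, so to rule out three subgroups in general position you must exploit more than the chain $H\supset A_{i_1}\supset A_{i_1}\cap A_{i_2}\supset\cdots$ extracted from the Boolean family of intersections. (A side remark: your justification that every maximal subgroup chain has length $\Omega(|H|)$ "in any finite group" is also wrong in general — consider $A_5>A_4>V_4>C_2>1$ — though it does hold for abelian, indeed supersoluble, groups.)

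The paper's proof avoids this entirely by inducting on $|H|$. After reducing to $\bigcap_i A_i=\{1\}$, write $H$ as a direct product of $m=m(H)$ cyclic groups of prime-power order and pick one factor $B$ with unique minimal subgroup $X$. Some $A_i$ fails to contain $X$, hence $A_i\cap B=\{1\}$ (every nontrivial subgroup of $B$ contains $X$), so $A_i$ embeds into $H/B$ and $m(A_i)\le m(H/B)=m-1$. The family $\{A_j\cap A_i\mid j\ne i\}$ is then a general-position family of size $r-1$ inside $A_i$, and induction gives $r-1\le m(A_i)\le m-1$. If you want to salvage a lattice-theoretic route, you would need to show that the subgroup lattice of $H$ cannot contain a meet-embedded Boolean lattice $2^{[r]}$ with $r>m(H)$ — which is essentially the lemma itself — rather than merely a chain of length $r$.
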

\begin{proof} The proof is by induction on the order of $H$.
Let $\Omega=\{ A_1, \ldots , A_r\}$ be a set of subgroups  of  $H$ in general position. Without loss of generality we can assume that $\cap_{i=1}^rA_i=\{1\}$.
If $m=m(H)$, then $H$ decomposes as a direct product of $m$ cyclic  groups of prime-power order. Let $B$ be one of these factors, and let $X$ be the unique minimal normal subgroup of $B$. Since
 $\cap_{i=1}^rA_i=\{1\}$, there exists at least an integer $i$ such that $X$ is not contained in  $A_i$.
 It follows that $A_i \cap B=\{1\}$,
 hence $A_i \cong A_iB/B \le H/B$ and
 $$m(A_i) \le m(H/B) =m-1.$$
Now, the set of subgroups of $A_i$
$$\Omega^*=\{ A_j \cap A_i \mid j \neq i, \ 1 \le j \le r\}$$
%$$\Omega^*=\{ A_i \cap A_1, \ldots , A_i \cap A_r\}$$
 is in general position, hence, by inductive hypothesis, $|\Omega^*|=r-1 \le m(A_i)$.  Therefore $r \le m$.
\end{proof}

\begin{proof}[{\bf Proof of Theorem \ref{thm:nilp}}]
Since $$m(G)=m(G/\frat(G)) \text { and } \md(G)=\md(G/\frat(G)),$$ without loss of generality we can assume that $\frat(G)=1$. In this case the Fitting subgroup $F$ of $G$ is a direct product of minimal normal subgroups of $G$,
 it is abelian and complemented.
 Let $H$ be a complement of $F$ in $G$; note that, being  $G'$ nilpotent by assumption, $H$ is  abelian.   We can write $F$ as a product of $H$-irreducible modules
 $$F = V_1^{n_1} \times \cdots \times V_r^{n_r}$$
 where $V_1, \ldots , V_r$ are irreducible $H$-modules, pairwise not $H$-isomorphic.

 By \cite[Theorem 2]{al}
 $m(G)$ coincides with the number of complemented factors in a chief series of $G$, hence
 $$m(G)= \sum_{i=1}^r n_i +m(H).$$

 Let $\mathcal M$ be a family of maximal subgroups of $G$ in general position.

  Let $M_{0,1}, \ldots , M_{0,\nu_0}$ the  elements of $\mathcal M$  containing $F$. We can write
  $$M_{0,i} =F \rtimes Y_{i}$$
   where $Y_{i}$ is a maximal subgroup of $H$. Note that $Y_1, \ldots , Y_{\nu_0}$ are maximal subgroups of $H$ in general position, hence, by   Lemma \ref{lem:abelian}, $ \nu_0 \le \md(H) \le m(H)$.
% $ \nu_0 \le \md(H)= m(H)$, where the last equality holds because $H$ is abelian.

 If $M$ is a maximal subgroup supplementing $F$, then $M$ contains the subgroup $U_i = \prod_{j\neq i} V_j^{n_j}$ for some index $i$.
 In particular $M=(U_i \times W_i)\rtimes H^v$ for some
  $v\in V_i^{n_i}$ and some hyperplane $W_i$ of $V_i^{n_i}$.
 Set $C_i=C_H(V_i)$ and $H_i=H/C_i$. Then $\FF_i=\End_{H_i}(V_i)$ is a field and $V_i$ is an absolutely irreducible  $\FF_iH_i$-module.
% (see \cite[29.13]{curtis-reiner}
% \bibitem{curtis-reiner} Curtis, Charles W.; Reiner, Irving
%Representation theory of finite groups and associative algebras.
%Pure and Applied Mathematics, Vol. XI
  Since  $H_i$  is  abelian, $\dim _{\FF_i}V_i=1$, that is $V_i \cong \FF_i$,  and  hence $H_i$ is isomorphic to a subgroup of $\FF_i^*$
  generated by a primitive element.  In particular we can apply Corollary \ref{cor:n+1} to the group $V_i^{n_i} \rtimes H_i$.
Let $M_{i,1}, \ldots , M_{i, \nu_i}$ the maximal subgroups in $\mathcal M$ containing $U_i$; say
$$M_{i,l}=(U_i \times W_{i,l}) \rtimes H^{v_{i,l}},$$
 where $v_{i,l} \in V_i^{n_i}$.
Note that the subgroups $\ol M_{i,l}=W_{i,l} \rtimes H_i^{v_{i,l}}$, for $l \in \{1, \ldots , \nu_i\},$ are maximal subgroups of $V_i^{n_i} \rtimes H_i$ in general position, hence, by  Corollary \ref{cor:n+1},
 $$\nu_i \le n_i+1.$$

 If $\nu_i \le n_i$ for every $i \neq 0$, then
 $$ |\mathcal M| = \sum _{i=1}^{r} \nu_i +\nu_0 \le \sum _{i=1}^{r} n_i + m (H) =m(G), $$
and the result follows.

Otherwise let $J$ be the set of the integers $i \in \{1, \ldots ,r \}$ such that $\nu_i = n_i+1$.
 By Corollary \ref{cor:n+1}, we can assume that, for some $v_i \in V_i^{n_i}$,
 \begin{eqnarray*}
\bigcap_{l=1}^{n_i} M_{i,l}& =&U_i \rtimes H^{v_i},\\
\bigcap_{l=1}^{n_i+1} M_{i,l}& =&U_i \rtimes C_i.
\end{eqnarray*}
Recall that the $M_{0,j} =F \rtimes Y_{j}$, for $j=1 , \ldots , \nu_0$, are the elements of $\mathcal M$  containing $F$.
Our next task is to prove that
$$\Omega=\{ C_i \mid i\in J\} \cup \{ Y_{j} \mid j=1, \dots , \nu_0 \} $$
is a set of subgroups of $H$ in general position.

Assume, by contradiction, that for example $C_1 \ge  (\cap_{i \neq 1} C_i)  \cap (\cap_{ j =1}^{\nu_0} Y_{j})$;  then
$$M_{1, n_1+1} \ge U_1 \rtimes C_1 \ge (\cap_{l=1}^{n_1} M_{1,l}) \cap (\cap_{i \neq 1} (\cap_{ l=1}^{n_i+1} M_{i,l})) \cap  (\cap_{ j=1}^{\nu_0} M_{0,j})$$
 against the fact that $\mathcal M$ is in general position.
 Similarly, if %we assume that
  $Y_{1 } \ge  (\cap_{i \in J} C_i)  \cap (\cap_{ j \ne 1} Y_{j})$, % for some subsets $X$ and $Y$,
 then
 $$M_{0, 1} = F \rtimes Y_1 \ge (\cap_{i \in J}(\cap_{l=1}^{n_i+1} M_{i,l})) \cap  (\cap_{ j \neq 1} M_{0,j}),$$  a contradiction.

Now we can apply Lemma \ref{lem:abelian} to get  that $|\Omega| \le m(H)$. Therefore we conclude that
$$ |\mathcal M| = \sum _{i=1}^{r} \nu_i +\nu_0 \le  \sum _{i=1}^{r} n_i + |J| + \nu_0 = \sum _{i=1}^{r} n_i +|\Omega|
\le \sum _{i=1}^{r} n_i + m (H) =m(G), $$
and the proof is complete.
\end{proof}

\section{Finite soluble groups with $m(G)=3$ and $\md(G)\geq p$}
In this section we will assume that $p$ and $q$ are two primes
and that $p$ divides $q - 1.$
Let $\FF$ be the field with $q$ elements and let $C=\langle c \rangle $ be the subgroup of order $p$ of the multiplicative group of $\FF.$
Let $V=\FF^p$
be a $p$-dimensional vector space over $\FF$ and let $\sigma=(1,2,\dots,p)\in \perm(p).$  The wreath group $H=C\wr \langle \sigma \rangle$ has an irreducible action on $V$ defined as follows:
if $v=(f_1,\dots,f_p)\in V$ and $h=(c_1,\dots,c_p)\sigma \in H$, then $v^h=(f_{1\sigma^{-1}}c_{1\sigma^{-1}},\dots,f_{p\sigma^{-1}}c_{p\sigma^{-1}}).$
We will concentrate our attention on the semidirect product
$$G_{q,p}=V\rtimes H.$$

\begin{prop}\label{tre}$m(G_{q,p})=3.$
\end{prop}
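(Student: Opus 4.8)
The plan is to compute $m(G_{q,p})$ using the fact (from \cite[Theorem 2]{al}) that for a finite soluble group $m(G)$ equals the number of complemented chief factors in a chief series of $G$. First I would identify a chief series of $G_{q,p}=V\rtimes H$. Since $H=C\wr\langle\sigma\rangle$ acts irreducibly on $V$, the subgroup $V$ is a minimal normal subgroup of $G$, contributing one complemented factor (the complement being $H$ itself). So the remaining task is to analyse the chief factors of $G$ lying above $V$, i.e. the chief factors of $H$, and count how many of those are complemented in $G$.

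Next I would describe the structure of $H=C\wr\langle\sigma\rangle$. Its base group $B=C^p$ is normal of order $p^{\,?}$—wait, $C$ has order $p$ and there are $p$ copies, so $|B|=p^{p}$—and $H/B\cong\langle\sigma\rangle\cong C_p$. Inside $B$, the "diagonal" subgroup $D=\{(c_1,\dots,c_p)\mid \prod c_i=1\}$... actually the relevant decomposition is: $B$ as an $\FF_p\langle\sigma\rangle$-module is the regular module $\FF_p[C_p]$, which has a unique composition series $0<W<B$ with $B/W$ trivial and $W$ the augmentation submodule; the augmentation submodule is itself uniserial with all composition factors trivial (since $\sigma$ is a single $p$-cycle and we are in characteristic $p$). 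Thus a chief series of $H$ through $B$ has $B/W$ (order $p$, central, the "sum of coordinates" quotient) at the top of $B$, then $p-1$ trivial chief factors of order $p$ inside $W$, and then $H/B$ of order $p$. I would then argue that among these, the only complemented chief factor of $G$ above $V$ is the top one $H/B$ together with one more coming from the $B/W$ layer—more carefully, I expect exactly two complemented factors above $V$, giving $m(G)=1+2=3$.

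The key computation—and the main obstacle—is deciding exactly which chief factors above $V$ are complemented \emph{in $G$}, not merely in $H$. A chief factor $K/L$ of $G$ (with $V\le L\le K\le VH$) is complemented in $G$ iff some maximal subgroup $M$ of $G$ satisfies $ML=G$ and $M\cap K=L$; when $V\le L$ this reduces to a statement about $H=\bar G$ modulo $V$, so I would work in $H$ and show: the chief factor $H/B$ is complemented (complement $B\rtimes\langle\text{one factor }C\rangle$, say the stabiliser-type subgroup fixing the last coordinate, suitably arranged), the top factor of $B$, namely $B/W$, is complemented in $H$ (its complement is $W\rtimes\langle\sigma\rangle$ together with a diagonal copy of $C$—concretely $\langle(c,c,\dots,c)\rangle\cong C$ meets $B/W$ trivially and together with $W$ generates $B$; one must check this extends to a complement of $B/W$ in $H$, i.e. is normalised appropriately—here the fixed-point subspace $C_B(\sigma)$ being the diagonal is what makes it work), but the $p-1$ interior trivial chief factors inside $W$ are \emph{not} complemented, because $W$ as an $\langle\sigma\rangle$-module in characteristic $p$ is indecomposable with a unique submodule of each dimension, so no $\langle\sigma\rangle$-invariant hyperplane of $W$ exists to give a complement, and passing to the full group $H$ (or $G$) does not help since $B/W$ is central. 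This indecomposability-of-the-Jordan-block argument is the crux; once it is in place, counting gives $m(G_{q,p})=1+1+1=3$. I would close by noting the lower bound $m(G)\ge 3$ is witnessed by an explicit irredundant generating triple (e.g. $v,\sigma,c_1$ for suitable $v\in V$ and $c_1=(c,1,\dots,1)$), and the upper bound $m(G)\le 3$ follows from the chief-factor count above.
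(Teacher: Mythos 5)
Your proposal is correct and takes essentially the same route as the paper: apply \cite[Theorem 2]{al}, count $V$ as one complemented chief factor (with complement $H$), and show there are exactly two complemented chief factors above $V$. The paper compresses your entire analysis of $H$ into one line by noting that the number of complemented chief factors of the $p$-group $H$ equals $m(H)=m(H/\frat(H))=m(C_p\times C_p)=2$; indeed your augmentation submodule $W$ is precisely $\frat(H)$ (observe that $(c,\dots,c)\in W$ because $c^p=1$, so your proposed complement of $B/W$ is simply $W\langle\sigma\rangle$), and this is also the cleanest way to see that the $p-1$ interior factors inside $W$ are Frattini and hence not complemented, rather than arguing via the absence of $\sigma$-invariant hyperplanes.
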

\begin{proof}Since $V$ is a complemented chief factor of $G_{q,p},$ by \cite[Theorem 2]{al} we have  $m(G)=1+m(H)=1+m(H/\frat (H))=1+m(C_p\times C_p)=3.$
\end{proof}

\begin{prop}$i(G_{q,p})=2p.$
\end{prop}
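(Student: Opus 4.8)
The plan is to exhibit an irredundant sequence of length $2p$ and then show no longer irredundant sequence exists. For the lower bound, recall $i(G) = \max\{m(K) \mid K \le G\}$, so it suffices to find a subgroup $K \le G_{q,p}$ with $m(K) = 2p$. The natural candidate is $K = V \rtimes C^p$, where $C^p = C \wr 1$ is the base group of $H$. Then $V = \FF^p$ is a direct product of $p$ chief factors of $K$, each of the form $\FF_i$ on which the corresponding factor $C$ of the base group acts by scalar multiplication and the other factors act trivially; each such factor is complemented in $K$. Together with the $p$ complemented chief factors coming from $C^p$ itself (which is elementary abelian of rank $p$, living atop $V$), a chief series of $K$ has $2p$ complemented factors, so by \cite[Theorem 2]{al} we get $m(K) = 2p$, whence $i(G_{q,p}) \ge 2p$.

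For the upper bound we must show $m(K) \le 2p$ for every subgroup $K \le G_{q,p}$. Write $K$ for an arbitrary subgroup and let $K_0 = K \cap V$, a subspace of $V$ normalized by the image $\pi(K)$ of $K$ in $H$ (under $G_{q,p} \to H$). By \cite[Theorem 2]{al}, $m(K)$ equals the number of complemented chief factors in a chief series of $K$; bounding this number above by $2p$ is the goal. The contribution from chief factors inside $K_0$ is at most $\dim_{\FF_q} K_0 \le p$ (in fact $K_0$, as an $\FF_q \pi(K)$-module, contributes at most its $\FF_q$-dimension). The contribution from chief factors of $K/K_0 \hookrightarrow H$ is at most $m(H/\frat(H)) = m(C_p \times C_p) = 2$ — wait, that would give only $p+2$. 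The point instead is to organize the count by exhibiting a \emph{single} subgroup realizing the maximum and to prove directly that any irredundant sequence has length $\le 2p$: an irredundant sequence $(g_1,\dots,g_n)$ in $G_{q,p}$ gives a strictly increasing chain $\langle g_1 \rangle < \langle g_1, g_2\rangle < \cdots$, and at each step the order increases by at least a prime; the primes involved are only $q$ (increasing the $V$-part, at most $p$ steps since $\dim V = p$) and $p$ (increasing the $H$-part, at most $\ra_p(H) = \ra_p(C_p \wr C_p)$ times). Here $C_p \wr C_p$ has a Sylow $p$-subgroup equal to itself, of order $p^{p+1}$; but an irredundant $p$-chain in it has length at most $p+1$ only if it were abelian — in fact the relevant bound is the composition length $p+1$ of the Sylow $p$-subgroup. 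That gives $p + (p+1) = 2p+1$, one too many, so some care is needed: the extra generator of $V$ cannot be used if all $p+1$ $p$-steps are used, because generating all of $H$ forces the action on $V$ and one of the $V$-steps becomes redundant. This interaction is the crux.

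Concretely, the clean way to run the upper bound is: let $(g_1, \dots, g_n)$ be irredundant with $\langle g_1, \dots, g_n \rangle = K$, reorder so that $g_1, \dots, g_k$ lie in $K \cap V$ and the chain genuinely grows inside $V$ at those steps (so $k \le \dim_{\FF_q}(K \cap V) \le p$), and the remaining $n - k$ steps grow the image in $H$, so $n - k \le i(H)$. One then computes $i(H)$ for $H = C_p \wr C_p$ separately — this is a concrete small computation: $i(H) = \max\{m(L) \mid L \le H\}$, and the maximum is attained at the base group $C_p^p$ (elementary abelian of rank $p$, so $m = p$) versus $H$ itself ($m(H) = m(H/\frat(H)) = 2$); checking all subgroup types shows $i(H) = p$. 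Hence $n \le p + p = 2p$. The main obstacle I anticipate is precisely the bookkeeping in the reordering step — justifying that an irredundant sequence can be rearranged so that the "$V$-growth" steps and the "$H$-growth" steps separate cleanly, and confirming via a short case analysis that $i(C_p \wr C_p) = p$ (in particular that no subgroup of $C_p \wr C_p$ beats the base group). Combined with the lower bound from $K = V \rtimes C^p$, this yields $i(G_{q,p}) = 2p$.
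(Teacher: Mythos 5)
Your lower bound coincides with the paper's: taking $K=V\rtimes C^p\cong(\FF\rtimes C)^p$, a chief series of $K$ has $2p$ factors, all complemented, so $m(K)=2p$ by \cite[Theorem 2]{al} and hence $i(G_{q,p})\ge 2p$. That half is fine.

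The upper bound, however, rests on a step that does not work as stated — and you flagged it yourself as the crux. The dichotomy behind your reordering, namely that every term of an irredundant sequence either lies in $V$ or strictly enlarges the image in $H$, is false: two terms $vh$ and $v'h$ with the same nontrivial $H$-part $h$ and distinct $v,v'\in V$ can both be needed in an irredundant sequence, yet neither lies in $V$ and their images in $H$ coincide. Consequently the projected sequence $\pi(g_{k+1}),\dots,\pi(g_n)$ need not be irredundant in $H$, and the inequality $n-k\le i(H)$ does not follow. (The route is salvageable: the subadditivity $i(G)\le i(V)+i(G/V)$ — essentially Whiston's lemma, see \cite{w} — combined with your correct computation $i(H)=p$ gives $i(G_{q,p})\le p+p=2p$; but you would have to prove or cite that lemma rather than gesture at a reordering.) The paper sidesteps all of this with the one-line argument you in fact use for $H$ but not for $G$: for a finite soluble group $X$, any irredundant sequence gives a strictly increasing chain of subgroups, whose length is bounded by the composition length of $X$, i.e.\ the number of prime divisors of $|X|$ counted with multiplicity. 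Since $|G_{q,p}|=q^pp^{p+1}$ has $2p+1$ such divisors, every proper subgroup $X<G_{q,p}$ has composition length at most $2p$, so $m(X)\le 2p$; together with $m(G_{q,p})=3$ from the preceding proposition this yields $i(G_{q,p})\le 2p$ with no fibering argument at all.
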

\begin{proof}Let $B\cong C^p$ be the base subgroup of $H$ and consider $K=V\rtimes B\cong (\FF\rtimes C)^p.$
A composition series of $K$ has length $2p$ and all its factors are indeed complemented chief factors, so
$m(K)=2p.$ Now by definition $i(G_{q,p})=\max\{m(X)\mid X\leq G_{q,p}\}\ge m(K)= 2p.$
 On the other hand, $m(G)=3$ and,  if $X<G_{q,p},$ then $|X|$ divides $(pq)^p$ and the composition length of $X$ is at most $2p,$
so $m(X)\leq 2p$. Therefore $i(G_{q,p}) \le 2p,$  and consequently $i(G_{q,p})=m(K)=2p.$
\end{proof}

\begin{lemma}\label{big}$\md(G_{q,p})\geq p.$
\end{lemma}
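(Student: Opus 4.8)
The plan is to exhibit an explicit family of $p$ maximal subgroups of $G_{q,p}$ in general position. Since $V$ is an irreducible $H$-module, every maximal subgroup of $G_{q,p}$ either contains $V$ (and corresponds to a maximal subgroup of $H$) or supplements $V$. The maximal subgroups of $H=C\wr\langle\sigma\rangle$ containing $V$ are too few to give $p$ of them (indeed $m(H)=2$), so the interesting subgroups will be supplements of $V$; following the structure seen in Lemma~\ref{lem:inters} and Corollary~\ref{cor:n+1}, these have the form $(W\times\text{something})\rtimes H^v$ with $W$ a hyperplane of $V$. The key point is that $V=\FF^p$ carries, besides the $H$-action, the permutation action of $\langle\sigma\rangle$, and one can use the $p$ \lq\lq coordinate hyperplanes\rq\rq\ or a $\langle\sigma\rangle$-orbit of hyperplanes to produce $p$ supplements whose successive intersections strictly descend.

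Concretely, I would proceed as follows. First, fix a hyperplane $W_0$ of $V$ that is $H$-invariant under no nontrivial subgroup but whose $\langle\sigma\rangle$-translates $W_0,W_0^\sigma,\dots,W_0^{\sigma^{p-1}}$ are $p$ distinct hyperplanes with $\bigcap_{i=0}^{p-1}W_0^{\sigma^i}=0$ — for instance the sum-zero hyperplane is $H$-invariant, so instead take $W_0=\{(f_1,\dots,f_p)\mid f_p=0\}$, whose $\sigma$-translates are the $p$ coordinate hyperplanes and intersect trivially. For each $i$, let $B_i\le H$ be the stabilizer of $W_0^{\sigma^i}$ together with an appropriate complement; more to the point, I would take $M_i$ to be a maximal subgroup of $G_{q,p}$ containing $W_0^{\sigma^i}\rtimes B$ where $B\cong C^p$ is the base group, or better, containing $W_0^{\sigma^i}$ together with a suitable point-stabilizer-type subgroup so that $M_i$ is a supplement of $V$ and $M_i\cap M_j\supseteq (W_0^{\sigma^i}\cap W_0^{\sigma^j})\rtimes(\text{common part})$. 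Then I would check that the chain $M_1\supsetneq M_1\cap M_2\supsetneq\cdots\supsetneq\bigcap_{i=1}^p M_i$ is strictly descending by tracking the $p$-part: the Sylow $p$-subgroup of $\bigcap_{i\in I}M_i$ is $\bigcap_{i\in I}W_0^{\sigma^{i-1}}$, which has dimension $p-|I|$ for $|I|\le p$, so these intersections are pairwise distinct, and the general-position condition follows immediately since $\bigcap_{j\ne i}M_j$ properly contains $\bigcap_{j}M_j$.

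The main obstacle, and the step requiring genuine care, is arranging the $p'$-parts of the $M_i$ compatibly so that each $M_i$ is actually a \emph{maximal} subgroup of $G_{q,p}$ (not merely a subgroup) and so that no collapse occurs in the $p'$-direction that would spoil strict descent before one reaches $\bigcap_i M_i$. By Corollary~\ref{cor:n+1} applied to $V\rtimes H_0$ for $H_0$ a cyclic section, hyperplane-based supplements can only collapse in the $p'$-part once, at the very last step; so the cleanest route is to choose the $M_i$ so that their pairwise and higher intersections all retain a common conjugate of the full base group $B$ (or of $H$ itself) as $p'$-part until the $p$-parts have been exhausted. One natural choice: let $M_i = \langle W_0^{\sigma^{i-1}}, B\rangle = W_0^{\sigma^{i-1}}\rtimes B$ where $B$ is the base group — this is a subgroup of index $p$ in $W_0^{\sigma^{i-1}}\rtimes H$, which in turn need not be maximal, so one may instead need $M_i$ to be a maximal subgroup \emph{containing} $W_0^{\sigma^{i-1}}\rtimes B$; one then verifies that such a maximal overgroup still meets $V$ in exactly $W_0^{\sigma^{i-1}}$ (using that $W_0^{\sigma^{i-1}}$ is a hyperplane and $V$ is a chief factor, so the only overgroups of $W_0^{\sigma^{i-1}}$ inside $V$ are $W_0^{\sigma^{i-1}}$ and $V$), which forces the $p$-part of $\bigcap_{i\in I}M_i$ to be contained in $\bigcap_{i\in I}W_0^{\sigma^{i-1}}$, and computing dimensions as above then yields that $\{M_1,\dots,M_p\}$ is in general position, giving $\md(G_{q,p})\ge p$.
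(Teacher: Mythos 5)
There is a genuine gap, and it is fatal to the construction. Your plan rests on the existence of maximal subgroups of $G_{q,p}$ that supplement $V$ and meet $V$ in a hyperplane, \lq\lq of the form $(W\times\text{something})\rtimes H^v$\rq\rq. Such subgroups exist in the setting of Lemma \ref{lem:inters} because there $V$ is a direct sum of one-dimensional $H$-modules; but in $G_{q,p}$ the module $V=\FF^p$ is \emph{irreducible} under $H=C\wr\langle\sigma\rangle$ (the coordinate lines are the $p$ distinct $C^p$-isotypic components, since the decomposition is multiplicity-free every $C^p$-submodule is a sum of coordinate lines, and $\sigma$ permutes these transitively), so $V$ is a minimal normal subgroup of $G_{q,p}$. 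If $M$ is maximal with $MV=G$, then $M\cap V$ is normalized by $M$ and centralized by the abelian group $V$, hence normal in $G=MV$, hence trivial or equal to $V$; since $M\ne G$ it is trivial. So every maximal subgroup supplementing $V$ is a complement of $V$, and no maximal subgroup of $G_{q,p}$ meets $V$ in a nonzero proper subspace.

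Consequently your subgroups $M_i$ collapse: a maximal subgroup containing $W_0^{\sigma^{i-1}}\rtimes B$ cannot supplement $V$ (it meets $V$ nontrivially), so it contains $V$; and since $B$ is maximal in $H$, the only such maximal subgroup is $V\rtimes B$, which is independent of $i$. The dimension count on $\bigcap_{i\in I}(M_i\cap V)$ (which, incidentally, is the $q$-part of the intersection, not its Sylow $p$-subgroup) therefore never gets started. The paper's proof instead takes $p$ distinct \emph{complements} $H^{e_1},\dots,H^{e_p}$, with $e_1,\dots,e_p$ the standard basis of $V$, and witnesses general position directly: for $h_i=(1,\dots,c,\dots,1)\in C^p$ one computes $h_i^{e_j}=h_i$ for $j\ne i$ while $h_i^{e_i}=((1/c-1)e_i)h_i\notin H^{e_i}$, so $h_i\in\bigcap_{j\ne i}H^{e_j}\setminus H^{e_i}$. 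If you want to salvage your descending-chain idea, the strict descent has to be tracked in the $p'$-parts $C_H(v)$ of intersections of complements (compare Lemma \ref{delta}), not inside $V$.
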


\begin{proof} Let $e_1=(1,0,\dots,0), e_2=(0,1,\dots,0),\dots, e_p=(0,0,\dots,1) \in V$ and
let $h_1=(c,1,\dots,1), h_2=(1,c,\dots,1),\dots, h_p=(1,1,\dots,c) \in C^p\leq H.$
For any $1\leq i,j \leq p,$ we have
$$h_i^{e_j}=h_i \text { if } i\neq j, \quad h_i^{e_i}=((1/c-1)e_i)h_i.$$
But then, for each $i\in \{1,\dots,p\},$  we have
$$h_i \in \cap_{j\neq i}H^{e_j}, \quad h_i \notin H^{e_i},$$
hence $H^{e_1},\dots,H^{e_p}$ is a family of maximal subgroups of $G_{q,p}$ in general position.
\end{proof}

In order to compute the precise value of $\md(G_{q,p}),$ the following lemma is useful.

\begin{lemma}\label{delta}Let $v_1=(x_1,\dots,x_p)$ and $v_2=(y_1,\dots,y_p)$ be two different elements of $V=\FF^p$ and
let $\Delta(v_1,v_2)=\{i\in \{1,\dots,p\}\mid x_i=y_i\}.$  Then
\begin{itemize}
\item if  $|\Delta(v_1,v_2)|=0,$ then  $|H^{v_1}\cap H^{v_2}| \le p;$
\item  if $|\Delta(v_1,v_2)|=u\neq 0,$ then $|H^{v_1}\cap H^{v_2}|= p^u.$
\end{itemize}
\end{lemma}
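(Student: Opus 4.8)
The plan is to turn the computation of $|H^{v_1}\cap H^{v_2}|$ into the computation of the order of a point stabiliser for the action of $H$ on $V$, and then to read that order off from the wreath-product structure of $H=C\wr\langle\sigma\rangle$. First I would record that, since $V$ is abelian and normal in $G_{q,p}=V\rtimes H$, for $g\in H$ and $v\in V$ the conjugate $g^{v}$ has $H$-component $g$ and $V$-component $v^{g^{-1}}-v$ (the same computation that gives $h_i^{e_i}=((1/c-1)e_i)h_i$ in the proof of Lemma~\ref{big}). Hence, if $g^{v_1}=k^{v_2}$ with $g,k\in H$, comparing components forces $g=k$ and $(v_1-v_2)^{g^{-1}}=v_1-v_2$; conversely each such $g$ yields an element $g^{v_1}=g^{v_2}\in H^{v_1}\cap H^{v_2}$. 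So $g^{v_1}\mapsto g$ is a bijection of $H^{v_1}\cap H^{v_2}$ onto $C_H(w)$, the stabiliser in $H$ of $w:=v_1-v_2$, and it suffices to compute $|C_H(w)|$. Writing $w=(w_1,\dots,w_p)$ we have $\Delta(v_1,v_2)=S:=\{k:w_k=0\}$, so $|S|=u$, and $w\neq 0$ gives $u<p$.

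Next I would analyse $C_H(w)$ using the displayed action formula. For $h=(c_1,\dots,c_p)\tau$ with $\tau\in\langle\sigma\rangle$, the equation $w^{h}=w$ becomes, after reindexing, the system $w_kc_k=w_{k\tau}$ for $1\le k\le p$. For $k\notin S$ this forces $k\tau\notin S$ together with $c_k=w_{k\tau}w_k^{-1}$, and it is solvable precisely when $w_{k\tau}w_k^{-1}\in C$; for $k\in S$ it forces $k\tau\in S$ and leaves $c_k$ arbitrary in $C$. Therefore $h\in C_H(w)$ requires $\tau$ to stabilise $S$ setwise, and, once $\tau$ does so and all the ratios $w_{k\tau}w_k^{-1}$ ($k\notin S$) lie in $C$, there are exactly $|C|^{u}=p^{u}$ admissible tuples $(c_1,\dots,c_p)$. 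Consequently $|C_H(w)|=p^{u}\,|T|$, where $T\subseteq\langle\sigma\rangle$ is the set of $\tau$ that stabilise $S$ and satisfy $w_{k\tau}w_k^{-1}\in C$ for every $k\notin S$; note $1\in T$.

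To finish, I would invoke that $\langle\sigma\rangle$ is cyclic of prime order $p$ and generated by the $p$-cycle $\sigma$, which acts transitively on $\{1,\dots,p\}$. If $u\neq 0$, then $S$ is a nonempty \emph{proper} subset of $\{1,\dots,p\}$, so no nontrivial element of $\langle\sigma\rangle$ can stabilise it; hence $T=\{1\}$ and $|H^{v_1}\cap H^{v_2}|=|C_H(w)|=p^{u}$. If $u=0$, then $p^{u}=1$ and trivially $|T|\le|\langle\sigma\rangle|=p$, so $|H^{v_1}\cap H^{v_2}|=|C_H(w)|\le p$. The only step that needs genuine care is the bookkeeping of the semidirect-product conventions in the reduction, so that the $V$-component of $g^{v}$ really is $v^{g^{-1}}-v$ and $H^{v_1}\cap H^{v_2}$ is carried bijectively onto $C_H(v_1-v_2)$; after that everything is a direct unwinding of the action formula, and the key case $u\neq 0$ rests only on a transitive cyclic group of prime order having no proper nonempty invariant subset.
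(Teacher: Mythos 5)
Your proof is correct and follows essentially the same route as the paper: reduce $|H^{v_1}\cap H^{v_2}|$ to $|C_H(v_1-v_2)|$ and compute that centraliser from the wreath-product action. You are merely more explicit than the paper in ruling out elements with nontrivial $\langle\sigma\rangle$-part when $u\neq 0$ (via the observation that a transitive group of prime order stabilises no nonempty proper subset), a point the paper's proof leaves implicit.
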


\begin{proof}Clearly
 $|H^{v_1}\cap H^{v_2}|=|H\cap H^{v_2-v_1}|=|C_H(v_2-v_1)|.$
 If $\Delta(v_1,v_2)= \emptyset,$ then $C_H(v_2-v_1) \cap C^p=\{1\}$, hence $|C_H(v_2-v_1)| \le p.$ %and thus $|H^{v_1}\cap H^{v_2}|\le p.$
 If $|\Delta(v_1,v_2)|=u\neq 0,$ then
$$C_H(v_2-v_1)=\{(c_1,\dots,c_p) \in C^p \mid c_i=1 \text  { if } i \notin \Delta(v_1,v_2)\}\cong C^u$$ has order $p^u.$
\end{proof}

\begin{prop}If $p\neq 2,$ then $\md(G_{q,p})=p.$
\end{prop}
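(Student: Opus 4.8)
The plan is to show $\md(G_{q,p})\le p$; together with Lemma~\ref{big} this gives the claimed equality. By Lemma~\ref{big} it suffices to bound any family $\MM$ of maximal subgroups of $G_{q,p}$ in general position. First I would split $\MM$ into those maximal subgroups containing $V$ and those supplementing $V$. Since $V$ is a chief factor and $m(H/\frat(H))=m(C_p\times C_p)=2$, there can be at most two members containing $V$ (by Lemma~\ref{lem:abelian} applied to $\ol H = H/\frat(H)$, the corresponding maximal subgroups of $H$ are in general position, so their number is at most $m(H)=2$). So the crux is to bound the number of members supplementing $V$, each of which has the form $H^{v}$ for some $v\in V$ (note $H$ is maximal in $G_{q,p}$ since $V$ is an irreducible $H$-module, hence a maximal subgroup supplementing $V$ is exactly a complement $H^v$).

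So suppose $H^{v_1},\dots,H^{v_s}$ are in general position, with distinct $v_i\in V$, possibly together with one or two maximal subgroups containing $V$. I would translate "general position" into the language of the intersections $H^{v_i}\cap H^{v_j} = (H\cap H^{v_j-v_i})^{v_i}$, whose orders are controlled by Lemma~\ref{delta} via the sets $\Delta(v_i,v_j)$. The key combinatorial observation I expect to use: after translating by $-v_1$ we may assume $v_1=0$, i.e.\ one of the complements is $H$ itself; then $H\cap H^{v_i}=C_H(v_i)\cong C^{u_i}$ where $u_i=|\Delta(0,v_i)|$ is the number of zero coordinates of $v_i$ (when $u_i\ge 1$). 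The intersection of \emph{all} the $H^{v_i}$ is contained in $H$ and equals the set of $(c_1,\dots,c_p)\in C^p$ fixing every $v_i$, i.e.\ $C^{U}$ where $U=\bigcap_i \Delta(0,v_i)$ is the set of common zero coordinates; and for general position we need, for each $i$, that the intersection of all the \emph{others} is strictly larger, which after the coordinate translation should force the existence, for each $i$, of a coordinate index which is zero in all $v_j$ with $j\ne i$ but not in $v_i$ --- giving an injection from $\{2,\dots,s\}$ into the $p$ coordinate positions, hence $s-1\le p-1$, i.e.\ $s\le p$. I would have to be careful about whether translating to make $v_1=0$ is harmless and about how the possible one or two extra maximal subgroups containing $V$ interact; the hypothesis $p\ne 2$ presumably enters exactly here, to rule out the edge case where an extra factor could be squeezed in (for $p=2$ one suspects $\md=2p=4>p$).

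Concretely the steps are: (1)~reduce to bounding complements $H^{v}$ plus at most two subgroups over $V$; (2)~normalize so that $H$ itself is among them, using conjugation-invariance of the property of being in general position together with the fact that $G_{q,p}$ acts by conjugation on the set of complements; (3)~express all pairwise and total intersections in terms of the zero-coordinate sets $\Delta$ using Lemma~\ref{delta}; (4)~for each index $i$, use the strict inclusion $\bigcap_{j\ne i}(H^{v_j})\supsetneq \bigcap_j (H^{v_j})$ to produce a coordinate $k_i$ that is a zero coordinate of every $v_j$ with $j\ne i$ but not of $v_i$, and check $i\mapsto k_i$ is injective; (5)~handle the subgroups containing $V$ and verify that when $p\ne2$ they cannot add to the count beyond $p$. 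The main obstacle I anticipate is step~(4)--(5): making the combinatorial extraction of the coordinates $k_i$ fully rigorous when some $v_i$ have $\Delta(0,v_i)=\emptyset$ (so Lemma~\ref{delta} only gives $|H\cap H^{v_i}|\le p$ rather than an exact $p$-power, and the intersection is not literally a coordinate subgroup of $C^p$), and correctly accounting for the at most two extra maximal subgroups over $V$ --- this is where the oddness of $p$ must be exploited, and where a naive count would otherwise only give $\md\le p+1$ or $\md\le p+2$.
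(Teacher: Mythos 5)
Your reduction (at most two members of $\MM$ contain $V$, the rest are complements $H^v$, and the intersections are controlled by Lemma \ref{delta}) matches the paper's opening moves, but the heart of your argument --- extracting for each complement $H^{v_i}$, $i\ge 2$, a coordinate $k_i$ that is zero in every $v_j$ with $j\neq i$ and nonzero in $v_i$, and concluding $s\le p$ --- is not established, and as sketched it cannot deliver the bound. Even in the favourable case where every $\Delta(0,v_i)$ is nonempty, so that $H\cap H^{v_i}$ really is the coordinate subgroup of $C^p$ supported on $\Delta(0,v_i)$, your injection runs over the $s-1$ indices $i\in\{2,\dots,s\}$ and lands in the $p$ coordinate positions, which gives only $s\le p+1$, not $s-1\le p-1$; the general-position condition at the remaining index $i=1$ concerns $\bigcap_{j\ge 2}H^{v_j}$, which is not contained in $H$ and so does not translate into the same coordinate bookkeeping, and no mechanism is offered to recover the missing $1$ or to absorb the up to two additional members containing $V$ (a family of $p-1$ complements plus two subgroups over $V$ would still total $p+1$). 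Moreover, when some $v_i$ has no zero coordinate, $C_H(v_i)$ need not lie in $C^p$ at all (for instance $\sigma$ centralizes $(1,1,\dots,1)$), so the ``common zero coordinates'' description of the intersections breaks down entirely. You flag both issues yourself as anticipated obstacles, but they are precisely the substance of the proof rather than details to be checked.

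The paper's actual argument sidesteps all of this with a chain-length count. If $t=|\MM|>p$, then since the successive intersections of the members of $\MM$ form a strictly decreasing chain, the intersection of any two members must contain a chain of subgroups of length at least $t-2\ge p-1$; applied to $H^{v_1}\cap H^{v_i}$, a conjugate of the $p$-subgroup $C_H(v_i-v_1)$ of $H$, this forces $|H^{v_1}\cap H^{v_i}|\ge p^{p-1}$, and Lemma \ref{delta} then forces $|\Delta(v_1,v_i)|=p-1$, i.e.\ each difference $v_i-v_1$ is supported on a single coordinate. If $\MM$ contains three distinct complements, the two differences $v_2-v_1$ and $v_3-v_1$ are supported on distinct single coordinates (otherwise two of the pairwise intersections coincide, against general position), whence $|\Delta(v_2,v_3)|=p-2$ and $|H^{v_2}\cap H^{v_3}|=p^{p-2}<p^{p-1}$, a contradiction. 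So either $t\le p$, or $\MM$ has at most two complements and at most two members containing $V$, giving $t\le 4$; only the case $p=3$, $t=4$ survives, and it is excluded by a short computation with $\frat(H)$. If you wish to salvage your route you must prove the coordinate-injection claim including the $i=1$ index, the $\Delta(0,v_i)=\emptyset$ case, and the interaction with the subgroups containing $V$; the chain-length bound is the cleaner path.
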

\begin{proof}By Lemma \ref{big} it suffices to prove that $\md(G_{q,p})\leq p.$ Assume that $\mathcal M$ is a family of maximal subgroups of $G=G_{q,p}$ in general position and let $t=|\mathcal M|.$
Let $M\in \mathcal M$. One of the following two possibilities occurs:
\begin{enumerate}
\item $M$ is a complement of $V$ in $G:$ hence $M=H^v$ for some $v\in V.$
\item $M$ contains $V:$ hence $M=V\rtimes X$ for some maximal subgroup $X$ of $H.$
\end{enumerate}
If $M_1$ and $M_2$ are two different maximal subgroups of  type (2), then $M_1\cap M_2= V\rtimes \frat(X)$ is contained in any
other maximal subgroup of type (2). Hence $\mathcal M$ cannot contain more then 2 maximal subgroups of type (2). Now we prove the following claim:
if $\mathcal M$ contains at least three different complements of $V$ in $G$, then $t\leq p.$ In order to prove this claim, assume, by contradiction
that $t>p$. This implies in particular that in the intersection $X$ of any two subgroups of $\mathcal M$, the subgroup lattice
 $\mathcal L (X)$  must contain a chain of length at least $p-1.$

% the subgroup lattice of the intersection of two subgroups of $\mathcal M$ must contain a chain of length at least $p-1.$
 Assume
that $H^{v_1}, H^{v_2}, H^{v_3}$ are different maximal subgroups in $\mathcal M.$ It is not restrictive to assume $v_1=(0,\dots,0)$. Let  $v_2=(x_1,\dots,x_p)$ and $v_3=(y_1,\dots,y_p).$
For $i\in \{2,3\}$, it must $|H\cap H^{v_i}|\geq p^{p-1}$,
hence, by Lemma \ref{delta}, $|\Delta(0,v_2)|=|\Delta(0,v_3)|=p-1,$ i.e. there exists $i_1 \neq i_2$ such that $x_{i_1}\neq 0$, $x_j=0$ if $j\neq i_1$,
$y_{i_2}\neq 0$, $y_j=0$ if $j\neq i_j$. But then $|\Delta(v_2,v_3)|=p-2$, hence $|H^{v_2}\cap H^{v_3}|=p^{p-2},$ a contradiction.
We have so proved that either $t\leq p$ or $\mathcal M$ contains at most 2 maximal subgroups of type (1) and at most
 2 maximal subgroups of type (2), and consequently $t\leq 4.$ It remains to exclude the possibility that $t=4$ and $p=3.$ By the previous considerations it is not restrictive to assume $\mathcal M=\{H, H^v, V\rtimes X_1, V\rtimes X_2\}$ where $X_1$ and $X_2$ are  maximal subgroups
of $H$ and $|\Delta(0,v)|=2.$ In particular we would have $H\cap H^v\leq C^3$: this excludes $C^3\in \{X_1,X_2\}$ but then
$X_1\cap C^3=X_2\cap C^3=\frat H=\{(c_1,c_2,c_3)\mid c_1c_2c_3=1\},$ hence $H\cap H^v \cap X_1= H\cap H^v\cap X_2,$ a contradiction.
\end{proof}

\begin{prop} $\md(G_{q,2})=3.$
\end{prop}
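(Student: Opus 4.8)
The plan is to prove $\md(G_{q,2})=3$ in two parts: a lower bound exhibiting a family of size $3$, and an upper bound ruling out any family of size $4$ or more. For the lower bound, I would note that Lemma \ref{big} only gives $\md(G_{q,2})\ge 2$, so I need one more subgroup. Take $H$ and $H^{e_1}$ as in the proof of Lemma \ref{big} (these are in general position since $h_1\in H$, $h_1\notin H^{e_1}$, $h_2\in H^{e_1}$, $h_2\notin H$), and adjoin a maximal subgroup $M=V\rtimes X$ of type (2) containing $V$. I would check that $\{H,H^{e_1},M\}$ is in general position: the intersection $H\cap H^{e_1}\cap M=(H\cap H^{e_1})\cap X$ must be properly contained in each of the three pairwise intersections. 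Since $H\cap H^{e_1}=C_H(e_1)$ has order $p$ (here $p=2$, so $|H\cap H^{e_1}|=2$) and $H\cap M=H^{e_1}\cap M$ has order $p^2=4$ or $2p=4$ while $V\rtimes\frat(X)$-type arguments show the triple intersection is smaller, a direct check with a well-chosen $X$ (for instance $X=\langle\sigma\rangle$ or $X=C^2$) finishes this direction. I expect this to be routine.

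The main work is the upper bound $\md(G_{q,2})\le 3$. Here I would follow the same dichotomy as in the previous proposition: any $M\in\mathcal M$ is either a complement $H^v$ of $V$ (type (1)) or contains $V$, i.e. $M=V\rtimes X$ (type (2)). As already observed, $\mathcal M$ contains at most two subgroups of type (2), since the intersection of any two of them is $V\rtimes\frat(X)$ for each, hence lies in every type-(2) maximal subgroup. So it suffices to show $\mathcal M$ contains at most two complements of $V$ — that way $t\le 4$ is immediate, and then I must separately exclude $t=4$. For the complements: suppose $H^{v_1},H^{v_2},H^{v_3}\in\mathcal M$ are distinct; normalize $v_1=0$. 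By Lemma \ref{delta}, since $p=2$, for $v\ne 0$ we have $|H\cap H^v|\le p=2$ if $\Delta(0,v)=\emptyset$ and $|H\cap H^v|=p^{|\Delta|}$ otherwise; as $v\ne 0$, $|\Delta(0,v)|\le 1$, so in all cases $|H\cap H^{v_i}|\le p=2$, meaning the subgroup lattice of any pairwise intersection has a chain of length at most $1$. If $t>2$, the triple intersection $H\cap H^{v_2}\cap H^{v_3}$ must be properly smaller than each $H\cap H^{v_i}$, forcing $H\cap H^{v_2}\cap H^{v_3}=1$ while each $H\cap H^{v_i}$ has order exactly $2$; but then the chain $1<H\cap H^{v_2}$ would have to refine through intersections, and comparing the three order-$2$ subgroups $H\cap H^{v_2}$, $H\cap H^{v_3}$, $H^{v_2}\cap H^{v_3}$ (the last computed via $\Delta(v_2,v_3)$) leads to a contradiction: at most two of these pairwise intersections can be distinct, so two of $H^{v_1},H^{v_2},H^{v_3}$ give the same pairwise intersection with the third, contradicting general position. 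I would spell this out carefully using Lemma \ref{delta} to pin down $|\Delta(v_2,v_3)|$ from $|\Delta(0,v_2)|=|\Delta(0,v_3)|=1$.

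It remains to exclude $t=4$, which by the above must be $\mathcal M=\{H^{v_1},H^{v_2},V\rtimes X_1,V\rtimes X_2\}$ with two complements and two type-(2) subgroups. Normalizing $v_1=0$, write $v_2=v$. The intersection of all four is $(H\cap H^v)\cap X_1\cap X_2\le C_H(v)\cap\frat(H)$ up to conjugacy; I would argue that $H\cap H^v$ has order at most $p=2$, so $H\cap H^v$ is either trivial or a subgroup of order $2$. If $|\Delta(0,v)|=1$ then $H\cap H^v=C_H(v)\cong C_p$ lies inside the base group $C^2$ but $H\cap H^v\ne\frat(H)$ (since $\frat(H)=\{(c_1,c_2)\mid c_1c_2=1\}$ and $C_H(v)$ fixes only one coordinate), so $H\cap H^v\cap X_i$ behaves the same for $i=1,2$ — giving $H\cap H^v\cap(V\rtimes X_1)=H\cap H^v\cap(V\rtimes X_2)$, contradicting general position — unless $H\cap H^v\le X_1\cap X_2$, which I would rule out by the structure of maximal subgroups of $H=C_2\wr C_2$, a group of order $8$ (indeed $H\cong D_8$), whose maximal subgroups and their intersections can be listed explicitly. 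The case $\Delta(0,v)=\emptyset$ gives $|H\cap H^v|\le 2$ and a similar explicit check. The main obstacle is this final $t=4$ exclusion, because when $p=2$ the group $H$ is so small ($|H|=8$) that the generic counting argument from the $p\ne 2$ case collapses, and I must instead argue by hand through the few maximal subgroups of $D_8$ and their lattice of intersections; the payoff is that the smallness makes a complete case check entirely feasible.
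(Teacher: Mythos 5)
Your upper bound has a genuine gap: the step ``$\mathcal M$ contains at most two complements of $V$'' is false. Take $H$, $H^{e_1}$, $H^{e_2}$ in $G_{q,2}$ with $e_1=(1,0)$, $e_2=(0,1)$. The three pairwise intersections are $C_H(e_1)=\{(1,c_2)\}$, $C_H(e_2)=\{(c_1,1)\}$ and $C_H(e_2-e_1)^{e_1}=\{1,((c,c)\sigma)^{e_1}\}$, three \emph{distinct} subgroups of order $2$, while the triple intersection is trivial; one checks directly that none of the three subgroups contains the intersection of the other two, so this is a family of three complements in general position. Your assertion that ``at most two of these pairwise intersections can be distinct'' is where the argument breaks: the pairwise intersections are order-$2$ subgroups of $G$, not of some small fixed group, and nothing forces two of them to coincide. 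Since the claim fails, your reduction to $t\le 4$ with a $2+2$ configuration does not go through, and the configurations $3+1$ and $4+0$ are never addressed. The correct and much shorter route, which is the one the paper takes, is this: among any four maximal subgroups in general position at least two, say $M_1,M_2$, are complements of $V$ (your bound of two type-(2) subgroups is fine); by Lemma \ref{delta} with $p=2$ one has $|M_1\cap M_2|\le 2$ in all cases; but general position forces the chain $M_1\cap M_2>M_1\cap M_2\cap M_3>M_1\cap M_2\cap M_3\cap M_4$ to be strictly decreasing, which is impossible below a group of order at most $2$. No case analysis on $t=4$ is needed.

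Your lower bound is also more laborious than necessary and, as written, does not work: the inequality $\md(G)\ge m(G)$ holds for every finite group (it is quoted in the introduction), so $\md(G_{q,2})\ge m(G_{q,2})=3$ follows at once from Proposition \ref{tre}. Moreover, of the two choices of $X$ you suggest, $\langle\sigma\rangle$ has order $2$ and is not maximal in $H\cong D_8$, while $X=C^2$ fails because $C^2$ contains $H\cap H^{e_1}=C_H(e_1)=\{(1,c_2)\}$, so $\{H,H^{e_1},V\rtimes C^2\}$ is \emph{not} in general position. (A choice such as $X=\langle\sigma,(c,c)\rangle$ would work, but the point is moot given the immediate bound above.)
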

\begin{proof}By Lemma \ref{tre}, $\md(G_{q,2})\geq m(G_{q,2})=3.$ Assume now, by contradiction, that $M_1,M_2,M_3,M_4$ is a family of maximal
subgroups of $G_{q,2}.$ As in the proof of the previous proposition, at least two of these maximal subgroups, say $M_1$ and $M_2,$ are complements
of $V$ in $G_{q,2}.$ But then, by Lemma \ref{delta}, $|M_1\cap M_2| \le 2,$ hence $M_1\cap M_2\cap M_3=1,$ a contradiction.
\end{proof}

\end{document}